\documentclass[a4paper,10pt]{article}

\usepackage{graphicx}
\usepackage{geometry}
\usepackage{amsthm}
\usepackage{amssymb}
\usepackage{amsmath}
\usepackage{hyperref}
\usepackage{xcolor}
\usepackage{enumerate}
\usepackage{listings}
\usepackage{complexity}
\usepackage{blkarray}
\usepackage{lmodern}

\usepackage[font=small,labelfont=bf]{caption}
\usepackage[font=small,labelfont=normalfont,labelformat=simple]{subcaption}
\widowpenalty10000
\clubpenalty10000

\graphicspath{{figs/}}

% custom environments
\newtheorem{theorem}{Theorem}

\newtheorem{lemma}[theorem]{Lemma}

\newtheorem{conjecture}{Conjecture}

\author
{
Raphael Steiner \thanks{Institute of Mathematics, Technische Universit\"at Berlin, Germany, email: \texttt{steiner@math.tu-berlin.de}.
Funded by DFG-GRK 2434 Facets of Complexity.}
}

\date{\today}

\title{A Note on Coloring Digraphs of Large Girth}

%------------------------------------------------------------------------------
\begin{document}
\maketitle

\begin{abstract}
The \emph{digirth} of a digraph is the length of a shortest directed cycle. The \emph{dichromatic number} $\vec{\chi}(D)$ of a digraph $D$ is the smallest size of a partition of the vertex-set into subsets inducing acyclic subgraphs.
A conjecture by Harutyunyan and Mohar~\cite{HarutyunyanMohar} states that $\vec{\chi}(D) \le \lceil\frac{\Delta}{4}\rceil+1$ for every digraph $D$ of digirth at least $3$ and maximum degree $\Delta$. The best known partial result by Golowich~\cite{Golowich} shows that $\vec{\chi}(D) \le \frac{2}{5}\Delta+O(1)$. 
In this short note we prove for every $g \ge 2$ that if $D$ is a digraph of digirth at least $2g-1$ and maximum degree $\Delta$, then $\vec{\chi}(D) \le (\frac{1}{3}+\frac{1}{3g}) \Delta + O_g(1)$. This improves the bound of Golowich for digraphs without directed cycles of length at most $10$.
\end{abstract}

\section{Introduction}
\paragraph{Preliminaries.}
All digraphs in this note are finite and do not contain loops or parallel arcs. Given a digraph $D$, we denote by $V(D)$ its vertex-set and by $A(D)$ the arc-set. A digraph is called \emph{acyclic} if it does not contain directed cycles. By $\Delta(D), \Delta^+(D), \Delta^-(D), \delta^+(D), \delta^-(D)$ we denote, respectively, the maximum degree in (the underlying graph of) $D$, and the extremal out- and in-degrees in $D$. We furthermore denote by $\tilde{\Delta}(D)=\max\{\sqrt{d^+(v)d^-(v)}|v \in V(D)\}$ the maximum geometric mean of the in- and out-degree of a vertex in $D$. Note that in case $D$ has no cycles of length $2$, the inequality of geometric and arithmetic mean shows that  $\tilde{\Delta}(D)\le \frac{\Delta(D)}{2}$. Given a vertex set $X \subseteq V(D)$, we denote by $D[X]$ the induced subdigraph of $D$ with vertex-set $X$ and call $X$ \emph{acyclic} if $D[X]$ is acyclic. By $\vec{g}(D)$ we denote the \emph{digirth} of $D$, that is, the shortest length of a directed cycle in $D$ ($\vec{g}(D):=\infty$ if $D$ is acyclic). Given a a family $A_1,\ldots,A_m$ of finite sets, a \emph{system of representatives} of this family is a set $X \subseteq \bigcup_{i=1}^{m}{A_i}$ such that $X \cap A_i \neq \emptyset$ for all $i \in [m]$.
\newline\newline
We deal with a notion of coloring for directed graphs introduced in 1982 by Neumann-Lara~\cite{neulara}. Given a digraph $D$, an \emph{acyclic} coloring of $D$ is a vertex-coloring in which all color classes are acyclic. The smallest number of colors sufficient for an acyclic coloring of $D$ is denoted by $\vec{\chi}(D)$ and called \emph{dichromatic number} of $D$. This notion has received a fair amount of attention in the past two decades, see \cite{largesubdivisions, perfect, lists, HARUTYUNYAN2019, dig5, dig4, fractionalNL} for some recent results. As for undirected graphs, there is a Brooks-type upper bound on the dichromatic number of a digraph, see~\cite{MoharBrooks, neulara}, which implies $\vec{\chi}(D) \le \lceil\frac{\Delta}{2}\rceil$ for every digraph of girth at least $3$ and maximum degree $\Delta \ge 3$.
In this note, we are motivated by the following conjecture from \cite{HarutyunyanMohar}, which claims that this Brook's type bound can be improved by a factor of $2$ if we forbid directed cycles of length $2$ in the digraph.
\begin{conjecture}[cf. \cite{HarutyunyanMohar}, Conjecture~1.5]
Let $D$ be a digraph of digirth at least $3$ and maximum degree $\Delta$. Then $\vec{\chi}(D) \le \left\lceil \frac{\tilde{\Delta}(D)}{2}\right\rceil +1 \le \lceil \frac{\Delta}{4}\rceil +1$.
\end{conjecture}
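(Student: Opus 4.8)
The plan is to prove the conjecture by a Brooks-type induction that, one colour at a time, extracts an induced acyclic subdigraph whose deletion decreases $\tilde{\Delta}$ by (essentially) $2$; iterating this about $\lceil\tilde{\Delta}/2\rceil$ times and spending one extra colour on an $O(1)$-degree remainder would yield $\vec{\chi}(D)\le\lceil\tilde{\Delta}(D)/2\rceil+1$. First I would reduce to the hard case. A vertex $v$ with $\min(d^+(v),d^-(v))$ small is cheap: the degeneracy argument (delete a vertex $v$ with few out-neighbours, colour the rest, then give $v$ a colour appearing on no out-neighbour of $v$, which cannot close a directed cycle through $v$) shows $\vec{\chi}(D)\le\max_{H\subseteq D}\min_{v\in H}\min(d_H^+(v),d_H^-(v))+1\le\tilde{\Delta}(D)+1$. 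Since $\min(d^+,d^-)\le\sqrt{d^+d^-}=\tilde{\Delta}$, only vertices that are close to balanced, $d^+(v)\approx d^-(v)\approx\tilde{\Delta}$, and close to regular can force the extra factor of $2$; so it suffices to treat near-balanced, near-regular digraphs of digirth at least $3$, exactly the regime where the arithmetic mean collapses onto the geometric mean.

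For the extraction step I would use the semi-random (naive-colouring) method rather than a single greedy pass, since any gain over the degeneracy bound must come from choosing colour classes that are genuinely spread out. Concretely: assign each vertex an independent uniform colour in $[k]$ with $k=\lceil\tilde{\Delta}/2\rceil+O(1)$, fix a uniform random order of $V(D)$, and within each colour class retain greedily, in that order, the vertices that do not close a monochromatic directed cycle; each retained class is then acyclic by construction. The digirth hypothesis enters here: because $D$ has no digons and no short directed cycles, the probability that an out-neighbour (resp.\ in-neighbour) of $v$ survives in $v$'s own colour is controlled, so that in expectation the residual out- and in-degrees of each vertex drop by the amount needed for the induction to close. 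A Lov\'asz-Local-Lemma argument over the vertices, using that the bad event for $v$ depends only on the colours and ranks inside a bounded-radius neighbourhood, should then upgrade the expected degree drop to a simultaneous drop at every vertex.

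The step I expect to be the genuine obstacle is quantitative efficiency, and it is precisely where all previous work falls short. A short computation signals the difficulty: if each vertex is included independently with probability $p$, then for the induced subdigraph to stay acyclic one essentially needs expected out-degree $p\,d^+<1$, i.e.\ $p<1/\tilde{\Delta}$ in the balanced case, whereas reducing every vertex's degree by the target amount requires $p\,d^+\gtrsim 2$. A single acyclic colour class is thus either too sparse to lower the degrees fast enough or too dense to remain acyclic, and this tension is exactly the gap between the conjectured $\tfrac12\tilde{\Delta}$ and the known $\tfrac25\Delta$ of Golowich (which is $\tfrac45\tilde{\Delta}$ when $\tilde{\Delta}=\tfrac12\Delta$). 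Bridging it seems to demand that colours be \emph{re-used} across rounds with a tight trade-off, and that directed cycles of \emph{all} lengths, not merely the short ones counted above, be shown never to obstruct the retained classes; leveraging digirth at least $3$ strongly enough to win the full factor of $2$, rather than the partial factor delivered by short-cycle counts, is the crux.

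Finally, because of the ceiling and the additive $+1$, the plan requires a Brooks-type endgame: the near-regular, near-balanced digraphs meeting the bound with equality (the directed analogues of complete graphs and odd cycles) must be identified and dispatched by an ad hoc argument, as in the undirected Brooks theorem, so as to absorb the $O(1)$ slack and land on exactly $\lceil\tilde{\Delta}/2\rceil+1$. I would expect this endgame to be routine once the asymptotic constant $1/2$ is secured, so essentially the entire weight of the conjecture rests on the efficiency obstacle described above.
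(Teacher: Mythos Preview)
The statement you are attempting to prove is not a theorem of the paper but an \emph{open conjecture} (due to Harutyunyan and Mohar) that the paper merely quotes as motivation. The paper contains no proof of it; its main result, Theorem~\ref{main}, is a strictly weaker bound $\vec{\chi}(D)\le(\tfrac13+\tfrac{1}{3g})\Delta+O_g(1)$ under the additional hypothesis $\vec{g}(D)\ge 2g-1$, proved by an entirely different and elementary argument (Lov\'asz's degree-splitting lemma combined with an acyclic system of representatives from Aharoni--Berger--Kfir). So there is no ``paper's own proof'' to compare against.

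Your write-up is not a proof either, and you say so yourself. The semi-random extraction step you sketch runs into exactly the tension you identify: to keep a random colour class acyclic you need inclusion probability $p\lesssim 1/\tilde{\Delta}$, while to drop every vertex's degree by $2$ per round you need $p\gtrsim 2/\tilde{\Delta}$; these are off by the very factor $2$ the conjecture asks for. Your proposal does not contain an idea for closing this gap---the paragraph about ``re-using colours across rounds with a tight trade-off'' and ``leveraging digirth at least $3$ strongly enough'' names the difficulty but supplies no mechanism. As you correctly observe, this is precisely the obstacle that stops Golowich at $\tfrac{2}{5}\Delta$ and the present paper at $\tfrac{1}{3}\Delta$ asymptotically. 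In short: the conjecture is open, the paper does not claim to prove it, and your outline is a reasonable description of why it is hard rather than a proof.
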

Approaching their conjecture, in \cite{HarutyunyanMohar} Harutyunyan and Mohar proved that there is a small absolute constant $\varepsilon>0$ such that $\vec{\chi}(D) \le (1-\varepsilon)\tilde{\Delta}(D) \le (\frac{1}{2}-\frac{\varepsilon}{2})\Delta(D)$ for every digraph $D$ of digirth at least $3$ and $\tilde{\Delta}$ sufficiently large. Subsequently Golowich~\cite{Golowich} improved the multiplicative constant in the upper bound, by showing that every digraph $D$ of digirth at least $3$ satisfies $\vec{\chi}(D) \le \frac{2}{5}\Delta(D)+O(1)$. Our contribution is to further improve the multiplicative constant in this upper bound for digraphs without short directed cycles.
\begin{theorem}\label{main}
Let $g \ge 2$ a natural number, and let $D$ be a digraph with $\vec{g}(D) \ge 2g-1$ and maximum degree $\Delta$. Then
$\vec{\chi}(D) \le (\frac{1}{3}+\frac{1}{3g})\Delta+(g+1)$.
\end{theorem}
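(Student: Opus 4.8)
The plan is to prove the stronger statement $\vec{\chi}(D) \le \frac{2}{3}\left(1+\frac{1}{g}\right)\tilde{\Delta}(D) + (g+1)$ and then to conclude via the inequality $\tilde{\Delta}(D) \le \frac{\Delta}{2}$ recorded in the preliminaries. I would set this up as a peeling argument: repeatedly extract colour classes $S_1, S_2, \ldots$, where each $S_i$ is an acyclic subset of the vertices not yet coloured, and bound the number of classes needed. Choosing each $S_i$ to be a maximal acyclic set in the remaining digraph already gives the baseline bound: if a vertex $v$ is first coloured in round $r$, then for every $i < r$ maximality forces a directed path $P_i$ from some out-neighbour $a_i$ of $v$ to some in-neighbour $b_i$ of $v$ entirely inside $S_i$ (so that $D[S_i \cup \{v\}]$ contains a cycle through $v$). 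Since the classes are disjoint, the $a_i$ are pairwise distinct out-neighbours and the $b_i$ pairwise distinct in-neighbours; as $\vec{g}(D)\ge 3$ precludes a vertex being both, this yields $r-1 \le \min\{d^+(v),d^-(v)\} \le \tilde{\Delta}(D)$, i.e.\ the Brooks-type bound corresponding to the case $g=2$.

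The improvement for larger $g$ comes from using that every cycle through $v$ has length at least $2g-1$, so each blocking path $P_i$ has at least $2g-2$ vertices. I would exploit the local acyclicity forced by the digirth: the out-ball $B^+_{g-1}(v)$ and the in-ball $B^-_{g-1}(v)$ of radius $g-1$ meet only in $v$, since a common vertex $u\ne v$ would give a closed walk of length at most $2g-2$ and hence a directed cycle shorter than $2g-1$. The initial segment of $P_i$ of length $g-1$ therefore lies in $B^+_{g-1}(v)$ and its terminal segment of length $g-1$ lies in the disjoint ball $B^-_{g-1}(v)$, and segments belonging to distinct classes are vertex-disjoint. Turning this surplus of forced, pairwise-disjoint structure into a quantitative saving is the heart of the argument: I would record, for each surviving round $i$, the representative neighbours and initial/terminal path-vertices it consumes, and select a consistent collection across all rounds as a system of representatives, verifying the relevant Hall-type condition from the degree bounds. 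Optimizing the trade-off between the two directions, weighting the contributions of in- and out-neighbours and using $\tilde{\Delta}=\sqrt{d^+d^-}$ rather than $\min\{d^+,d^-\}$, should produce the coefficient $\frac{2}{3}(1+\frac1g)$.

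The main obstacle is precisely this last counting step: showing that the long, differently-coloured blocking paths cannot all reuse the same small neighbourhood of $v$, so that each class consumes, in the balanced case $d^+(v)\approx d^-(v)$, an effective $\tfrac{3g}{g+1}$ units of degree rather than merely $2$. I expect this to require (i) a careful choice of the colour classes, not merely maximal acyclic but maximal subject to an auxiliary reachability condition that pins down where the blocking paths enter and leave the radius-$(g-1)$ balls, and (ii) a discharging or linear-programming computation, equivalently a transversal argument, to convert the disjointness of these path-segments into the stated constant. The additive term $g+1$ should be accounted for by the $O(g)$ slack in the local ball estimates and by the final rounds, where the remaining digraph is too small for the asymptotic counting to apply.
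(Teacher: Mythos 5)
There is a genuine gap, and you name it yourself: the entire quantitative content of the theorem is deferred to an unimplemented step. Your peeling argument with maximal acyclic sets correctly yields the baseline $\vec{\chi}(D)\le\min\{d^+(v),d^-(v)\}+1\le\tilde{\Delta}(D)+1$ (one distinct out-neighbour $a_i$ and one distinct in-neighbour $b_i$ consumed per round, no overlap by digirth $\ge 3$), and your observation that $B^+_{g-1}(v)\cap B^-_{g-1}(v)=\{v\}$ and that the blocking-path segments from distinct rounds are vertex-disjoint is correct. But this disjointness is essentially free and imposes no constraint: the balls $B^\pm_{g-1}(v)$ can have size up to $\Delta^{g-1}$, while the number of rounds is at most $\tilde{\Delta}+1$, so the segments fit comfortably without ever forcing a class to consume more than the two neighbours $a_i,b_i$ of $v$. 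Only the first and last vertices of a blocking path are neighbours of $v$; the interior vertices are at distance at least $2$, so their disjointness says nothing about $N^+(v)\cup N^-(v)$. To get your claimed effective cost of $\tfrac{3g}{g+1}$ degree units per class you would need a mechanism forcing each class to intersect the neighbourhood of $v$ in more than two vertices (or forbidding classes that block $v$ cheaply), and maximality alone does not provide one. The ``auxiliary reachability condition'' and the ``discharging or linear-programming computation'' that are supposed to supply this are never specified, so what you have is a plan, not a proof. Note also that your intended strengthening $\vec{\chi}(D)\le\frac{2}{3}(1+\frac{1}{g})\tilde{\Delta}(D)+(g+1)$ is strictly stronger than the theorem (since $\tilde{\Delta}$ can be far below $\Delta/2$ for unbalanced degrees), which raises the difficulty further with no evidence the local approach can deliver it; even the much weaker $(1-\varepsilon)\tilde{\Delta}$ bound of Harutyunyan and Mohar required heavy probabilistic machinery.

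For contrast, the paper's proof is a global decomposition rather than vertex-by-vertex peeling, and every step is elementary. It applies Lov\'asz's degree-splitting lemma to partition $V(D)$ into $\ell=\lfloor\Delta/(3g)\rfloor+1$ parts with $\Delta(D[Y_i])<3g$; inside each part, a $(g+2)$-critical subgraph would have $\delta^-\ge g+1$ by Neumann-Lara, hence $\Delta^+\le(3g-1)-(g+1)=2g-2<\vec{g}(D)$, so the girth exceeds the maximum out-degree. In that regime a second Lov\'asz split into pieces of maximum degree less than $3$ (disjoint unions of oriented paths and cycles) combines with the Aharoni--Berger--Kfir theorem on acyclic systems of representatives, which hits every directed cycle of the decomposition with a single acyclic transversal class, giving $\vec{\chi}\le g+1$ per part and $\vec{\chi}(D)\le(g+1)\ell$ overall. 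Your instinct to invoke a Hall-type transversal result points in the right direction, but in the paper it is used to hit the directed cycles of a near-$2$-regular decomposition, not to organize blocking paths around a single vertex; the digirth hypothesis enters only through the comparison $2g-2<\vec{g}(D)$, not through disjointness of distance balls.
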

\section{Proof of Theorem~\ref{main}}
We need three auxiliary results by Neumann-Lara, by Aharoni, Berger and Kfir, and by Lov\'{a}sz.
\begin{lemma}[cf. \cite{neulara}, Theorem 5]\label{critical}
Let $k \in \mathbb{N}$ and let $D$ be a $(k+1)$-critical digraph, that is, $\vec{\chi}(D)=k+1$ but $\vec{\chi}(D') \le k$ for every proper subdigraph $D' \subsetneq D$. Then $\delta^+(D), \delta^-(D) \ge k$.
\end{lemma}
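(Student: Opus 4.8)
The plan is to argue by contradiction, exploiting criticality exactly as one does for ordinary vertex colorings of undirected graphs. Suppose, towards a contradiction, that $D$ contains a vertex $v$ with $d^+(v) \le k-1$. Since $D$ is $(k+1)$-critical, the proper subdigraph $D-v$ satisfies $\vec{\chi}(D-v) \le k$, so it admits an acyclic coloring $c \colon V(D) \setminus \{v\} \to [k]$ in which every color class is acyclic. The goal is then to extend $c$ to a valid acyclic $k$-coloring of all of $D$; this contradicts $\vec{\chi}(D)=k+1$ and forces $\delta^+(D) \ge k$.

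To extend $c$, I would first locate a color that is ``safe'' for $v$. Since $v$ has at most $k-1$ out-neighbors, the set of colors $\{c(u) : (v,u) \in A(D)\}$ appearing on out-neighbors of $v$ has size at most $k-1$, so there is a color $i \in [k]$ with $c(u) \neq i$ for every out-neighbor $u$ of $v$. Set $c(v) := i$. The key step is to verify that the enlarged class $c^{-1}(i) \cup \{v\}$ remains acyclic. Indeed, any directed cycle newly created by adding $v$ must pass through $v$, and hence must use some arc $(v,u) \in A(D)$ leaving $v$ towards a vertex $u$ of the same class, i.e.\ with $c(u)=i$. By the choice of $i$ no such out-neighbor exists, so no directed cycle through $v$ can lie inside the class. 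As $c^{-1}(i)$ was acyclic before adding $v$ and all other classes are untouched, the extension is an acyclic $k$-coloring of $D$, the desired contradiction.

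For the bound $\delta^-(D) \ge k$ one runs the identical argument with all arcs reversed: passing to the reverse digraph $\revvec{D}$ preserves acyclicity of vertex sets (a set spans a directed cycle in $D$ iff it does in $\revvec{D}$), hence preserves the dichromatic number and the property of being $(k+1)$-critical, while interchanging in- and out-degrees; thus $\delta^-(D) = \delta^+(\revvec{D}) \ge k$. (Equivalently, repeat the argument choosing a color absent from the in-neighbors of a hypothetical low-in-degree vertex.) The only point requiring a moment's care, and the real crux of the argument, is the monochromaticity observation: a directed cycle through $v$ needs both an in-arc and an out-arc of $v$ inside the color class, so suppressing merely the out-arcs through the color choice already rules out every such cycle. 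This is precisely what makes a single free color sufficient even though $v$ may have as many as $k-1$ out-neighbors, and it is the directed analogue of the ``one free color'' phenomenon in the undirected criticality lemma.
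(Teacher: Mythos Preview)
Your argument is correct and is the standard proof of this fact. Note, however, that the paper does not supply its own proof of this lemma: it is quoted as an auxiliary result from Neumann-Lara~\cite{neulara} (Theorem~5) and used as a black box. There is therefore nothing in the paper to compare against, but what you wrote is exactly the expected justification and would serve perfectly well as a self-contained proof inserted after the statement.
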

\begin{lemma}[cf. \cite{Aharoni}, Corollary II.13]\label{acyclichittingset}
Let $D$ be a digraph of digirth at least $\gamma \ge 2$ and let $V_1,V_2,\ldots,V_m$ be a partition of $V(D)$. If $|V_i| \ge \frac{\gamma}{\gamma-1}\Delta^+(D)$ for all $i \in [m]$, then there is a system $X$ of representatives of $V_1,\ldots,V_m$ which is acyclic in $D$.
\end{lemma}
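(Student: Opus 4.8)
The plan is to recast the lemma as the existence of a rainbow face in a suitable simplicial complex and then to invoke the topological Hall theorem. Let $\mathcal{I}(D)$ denote the \emph{acyclicity complex} of $D$, whose faces are exactly the acyclic vertex subsets $S \subseteq V(D)$ (i.e.\ those with $D[S]$ acyclic). This is a genuine simplicial complex: a subset of an acyclic set induces a subdigraph of an acyclic digraph and is hence acyclic, and every singleton is a face because $\vec{g}(D) \ge \gamma \ge 2$ rules out loops. In this language an acyclic system of representatives of $V_1,\ldots,V_m$ is precisely a face of $\mathcal{I}(D)$ meeting every block $V_i$, that is, a \emph{rainbow face}. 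I would then apply the topological Hall theorem of Aharoni--Haxell (in the homological form of Meshulam / Aharoni--Berger--Ziv): writing $\eta(\mathcal{C})$ for the homological connectivity of a complex $\mathcal{C}$ (so $\eta(\mathcal{C}) \ge k$ means $\tilde{H}_{i}(\mathcal{C}) = 0$ for all $i \le k-2$), a rainbow face exists as soon as $\eta(\mathcal{I}(D)[\bigcup_{i \in I} V_i]) \ge |I|$ holds for every $I \subseteq [m]$. Note that the restriction $\mathcal{I}(D)[\bigcup_{i \in I} V_i]$ is exactly the acyclicity complex $\mathcal{I}(D[\bigcup_{i \in I} V_i])$, since acyclicity is an induced-subdigraph property.

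It therefore suffices to establish the single connectivity estimate
\[
\eta(\mathcal{I}(D)) \ge \frac{(\gamma-1)\,|V(D)|}{\gamma\,\Delta^+(D)}
\]
for every digraph $D$ of digirth at least $\gamma$, because then for each $I \subseteq [m]$ the induced subdigraph on $W := \bigcup_{i \in I} V_i$ satisfies $\Delta^+(D[W]) \le \Delta^+(D)$ and $|W| \ge |I|\cdot \frac{\gamma}{\gamma-1}\Delta^+(D)$, and feeding this into the estimate gives $\eta \ge |I|$ exactly as required. (Sanity check: for a single directed $2$-cycle one has $\gamma=2$, $\mathcal{I}(D)\simeq S^0$, $\eta=1$, and the bound reads $\frac{1}{2}\cdot\frac{2}{1}=1$.) I would prove the displayed bound by induction on $|V(D)|$ via the standard Meshulam-type deletion--link recursion: for any vertex $v$,
\[
\eta(\mathcal{I}(D)) \ge \min\bigl\{\eta(\mathcal{I}(D-v)),\ \eta(\operatorname{lk}(v))+1\bigr\},
\]
where $\mathcal{I}(D-v)$ is the deletion, itself the acyclicity complex of the smaller digraph $D-v$ (to which induction applies), and $\operatorname{lk}(v)=\{S \subseteq V(D)\setminus\{v\} : D[S\cup\{v\}]\text{ acyclic}\}$ is the link.

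The main obstacle is the link term, and this is precisely where the digirth hypothesis must enter to produce the constant $\frac{\gamma-1}{\gamma}$. Unlike the undirected independence complex -- whose vertex links are again independence complexes -- $\operatorname{lk}(v)$ is in general \emph{not} the acyclicity complex of an induced subdigraph, because whether $S\cup\{v\}$ remains acyclic depends on directed paths inside $S$ and not merely on which vertices are present. The point to exploit is that adding $v$ can destroy acyclicity only through a directed cycle passing through $v$, and every such cycle has length at least $\gamma$; hence the "cost" of accommodating $v$ is spread over at least $\gamma-1$ further vertices, so that deleting $v$ together with the few vertices responsible for short return paths consumes only a $\tfrac{1}{\gamma}$-fraction of the budget per unit of connectivity. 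Making this rigorous -- i.e.\ lower-bounding $\eta(\operatorname{lk}(v))$ by the connectivity of a controlled smaller acyclicity complex, and choosing $v$ (e.g.\ of minimum out-degree, or lying on a shortest directed cycle) so that the recursion closes with the factor $\frac{\gamma-1}{\gamma}$ -- is the technical heart of the argument. A topology-free alternative is a Haxell-type augmenting argument: take a partial acyclic transversal representing a maximal set of blocks, assume some block $V_j$ is unrepresented, and run an alternating exchange that repeatedly swaps out vertices whose presence would force a directed cycle; the digirth bound makes every conflict "long," so the size condition $|V_i|\ge\frac{\gamma}{\gamma-1}\Delta^+$ always leaves room to continue, contradicting maximality. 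In either route I expect the bookkeeping of how long directed cycles interact with the link, respectively the exchange chain, to be the most delicate part.
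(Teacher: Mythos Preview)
The paper does not prove this lemma at all; it is quoted as Corollary~II.13 of Aharoni, Berger and Kfir and used as a black box. So there is no in-paper argument to compare against, only the cited source.

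Your framework is the correct one and matches the method of the cited paper: the result is obtained via the topological Hall theorem after establishing a connectivity lower bound for the acyclicity complex $\mathcal{I}(D)$ in terms of $|V(D)|$, $\Delta^+(D)$ and the digirth. However, what you have written is a plan, not a proof. You yourself isolate the crux---the link $\operatorname{lk}(v)$ in $\mathcal{I}(D)$ is \emph{not} the acyclicity complex of any induced subdigraph, so the naive Meshulam deletion/link recursion does not close---and then defer its resolution to an unspecified ``choice of $v$'' or a heuristic that ``accommodating $v$ costs only a $1/\gamma$-fraction.'' That heuristic does not translate directly into a bound on $\eta(\operatorname{lk}(v))$: the link records the \emph{global} condition that no directed cycle passes through $v$, and long cycles through $v$ can involve vertices far from $v$, so there is no obvious small set whose removal makes the link an acyclicity complex again. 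Neither your inductive route nor the Haxell-type exchange alternative is pushed far enough to see the constant $\tfrac{\gamma-1}{\gamma}$ actually emerge. If you intend to reconstruct the proof rather than cite it, this step must be supplied in full; otherwise the correct thing to do is exactly what the paper does---invoke the reference.
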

\begin{lemma}[\cite{lovasz}]\label{maxdegreduction}
Let $G$ be an undirected graph, $k \in \mathbb{N}$. Then $V(G)$ admits a partition $X_1,\ldots,X_k$ such that for every $v \in X_i, i \in [k]$, we have $\deg_{G[X_i]}(v) \le \frac{1}{k}\deg(v)$.
\end{lemma}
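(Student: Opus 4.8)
The plan is to establish the lemma by a short extremal argument on vertex partitions, which is the standard proof of this fact. Since $V(G)$ is finite, there are only finitely many ordered partitions of $V(G)$ into $k$ parts; among all of them, fix one, say $X_1,\ldots,X_k$, that maximizes the number of edges of $G$ having their two endpoints in distinct parts (equivalently, one that minimizes the number of edges contained in a single part). I will show that this maximizing partition already satisfies the conclusion.

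For a vertex $v \in V(G)$ and an index $j \in [k]$, let $d_j(v)$ denote the number of neighbors of $v$ lying in $X_j$, so that $\sum_{j=1}^{k} d_j(v) = \deg(v)$ and, if $v \in X_i$, then $d_i(v) = \deg_{G[X_i]}(v)$. The key step is the following local optimality observation. Suppose $v \in X_i$ and consider relocating $v$ to another part $X_j$ with $j \neq i$: the $d_i(v)$ edges from $v$ into $X_i$ become edges between distinct parts, while the $d_j(v)$ edges from $v$ into $X_j$ cease to be such, and all remaining edges incident to $v$ are unaffected. Hence this move changes the number of edges between distinct parts by exactly $d_i(v) - d_j(v)$. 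By the maximality of our partition this quantity cannot be positive, so $d_i(v) \le d_j(v)$ for every $j \in [k]$ (the case $j = i$ holding trivially).

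It remains to average this inequality over $j$. Summing $d_i(v) \le d_j(v)$ over all $j \in [k]$ yields $k\, d_i(v) \le \sum_{j=1}^{k} d_j(v) = \deg(v)$, whence $\deg_{G[X_i]}(v) = d_i(v) \le \frac{1}{k}\deg(v)$, as required. I do not expect any genuine obstacle here: the argument is purely combinatorial and rests only on the finiteness of the set of partitions. The only points demanding care are verifying the sign of the net change under a single relocation (so that maximality delivers the inequality in the correct direction) and remembering to include the trivial term $j = i$ when averaging, since it is precisely this inclusion that supplies the factor $\frac{1}{k}$ rather than $\frac{1}{k-1}$.
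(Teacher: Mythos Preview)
Your proof is correct and is the standard extremal argument for this classical fact of Lov\'asz. Note that the paper itself does not prove this lemma; it merely quotes it as a known result with a citation, so there is no in-paper proof to compare against.
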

The proof of Theorem~\ref{main} relies on the following bound on the dichromatic number for digraphs of large girth compared to their maximum out-degree.
\begin{lemma}\label{highgirth}
Let $D$ be a digraph such that $\vec{g}(D) > \Delta^+(D)$. Then 
$$\vec{\chi}(D) \le \left\lfloor \frac{\Delta(D)}{3}\right\rfloor +2.$$
\end{lemma}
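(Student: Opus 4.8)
The plan is to construct an explicit coloring with $\lfloor \Delta/3\rfloor + 2$ classes by combining the degree-reduction partition of Lemma~\ref{maxdegreduction} (Lov\'asz) with a single application of the acyclic-transversal result Lemma~\ref{acyclichittingset} (Aharoni--Berger--Kfir); criticality (Lemma~\ref{critical}) will not be needed. Write $\gamma := \vec{g}(D)$ and set $p := \lfloor \Delta/3\rfloor + 1$. First I would apply Lemma~\ref{maxdegreduction} to the underlying graph of $D$ with parameter $p$, obtaining a partition $X_1,\ldots,X_p$ of $V(D)$ with $\deg_{D[X_i]}(v) \le \frac1p\deg(v)$ for every $v \in X_i$. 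The whole point of the choice $p = \lfloor\Delta/3\rfloor + 1$ is the elementary inequality $\Delta < 3p$, which forces $\frac1p\deg(v) \le \frac{\Delta}{p} < 3$ and hence, degrees being integers, $\deg_{D[X_i]}(v) \le 2$ for all $v$. Thus each induced subdigraph $D[X_i]$ has maximum degree at most $2$ in its underlying graph, so it is a disjoint union of (directed or undirected) paths and cycles.

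The crucial structural consequence is that the directed cycles sitting inside the parts are extremely well-behaved. Within one part they are pairwise vertex-disjoint (being the vertex sets of distinct connected components of a graph of maximum degree at most $2$), and across different parts they are disjoint because the $X_i$ are. Hence the family of all directed cycles contained in some $D[X_i]$ is a family $C_1,\ldots,C_r$ of pairwise vertex-disjoint directed cycles of $D$, and each satisfies $|V(C_j)| \ge \gamma$ since any directed cycle has length at least the digirth. I would then apply Lemma~\ref{acyclichittingset} to the subdigraph $D' := D\big[\bigcup_{j} V(C_j)\big]$, using the partition of $V(D')$ into the parts $V(C_1),\ldots,V(C_r)$. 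The hypothesis $\gamma > \Delta^+(D)$ enters exactly here: it is equivalent to $\gamma \ge \frac{\gamma}{\gamma-1}\Delta^+(D) \ge \frac{\gamma}{\gamma-1}\Delta^+(D')$, so each part satisfies $|V(C_j)| \ge \gamma \ge \frac{\gamma}{\gamma-1}\Delta^+(D')$ and the size condition of Lemma~\ref{acyclichittingset} is met (note $\vec{g}(D') \ge \gamma \ge 2$). This produces an acyclic set $S \subseteq V(D')$ meeting every $V(C_j)$.

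Finally I would output the coloring with classes $S$ and $X_1\setminus S,\ldots,X_p\setminus S$, giving $p+1 = \lfloor\Delta/3\rfloor + 2$ colors in total. The class $S$ is acyclic by construction. For each $i$, every directed cycle of $D[X_i]$ is one of the $C_j$, and each such $C_j$ is an entire component of $D[X_i]$; since $S$ contains a vertex of every $C_j$, deleting $S$ turns each of these directed cycles into a directed path and leaves all other (already acyclic) components acyclic, so $D[X_i\setminus S]$ is acyclic. If no part contains a directed cycle, then $X_1,\ldots,X_p$ is already a valid coloring and the extra class is superfluous; degenerate cases such as $D$ having no arcs, or $\Delta \le 2$, are immediate.

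The only genuinely delicate point is the interface between the two tools: one must check that the maximum-degree-$2$ structure makes the offending directed cycles \emph{simultaneously} vertex-disjoint and of length $\ge \gamma$, so that they may serve as the parts of a single transversal problem, and that the arithmetic of $p=\lfloor\Delta/3\rfloor+1$ both forces degree $\le 2$ and keeps the color count at $\lfloor\Delta/3\rfloor+2$. Everything else is bookkeeping; in particular no iteration of Lemma~\ref{acyclichittingset} is required, which is precisely what keeps the additive term at $2$ rather than letting it grow with $\Delta$.
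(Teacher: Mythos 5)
Your proposal is correct and follows essentially the same route as the paper's proof: the same choice of $p=\lfloor\Delta/3\rfloor+1$ in Lov\'asz's partition lemma to force maximum degree at most $2$ in each part, the same single application of Lemma~\ref{acyclichittingset} to the induced subdigraph on the union of the directed cycles (using $\gamma > \Delta^+(D)$ to verify $|V(C)| \ge \gamma \ge \frac{\gamma}{\gamma-1}\Delta^+$), and the same final coloring $X_1\setminus S,\ldots,X_p\setminus S, S$. Your explicit remarks that the directed cycles are pairwise disjoint full components (so they legitimately form a partition for the transversal lemma) only make explicit what the paper leaves implicit.
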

\begin{proof}
Abbreviate $\Delta=\Delta(D)$ and $\gamma=\vec{g}(D)$ and put $k:=\left\lfloor \frac{\Delta}{3} \right\rfloor+1>\frac{\Delta}{3}$. By Lemma~\ref{maxdegreduction} there is a partition $X_1, \ldots, X_k$ of $V(D)$ such that for every $i \in [m]$ we have $\Delta(D[X_i]) \le \frac{\Delta}{k}<3$. Hence, $D[X_i]$ is a disjoint union of oriented paths and oriented cycles. For every $i$, let us denote by $\vec{\mathcal{C}}_i$ the set of all \emph{directed} cycles in $D[X_i]$ and put $V':=\bigcup_{i \in [k], C \in \vec{\mathcal{C}}_i}{V(C)}$. We claim that there is an acyclic set $X$ in $D$ such that $X \cap V(C) \neq \emptyset$ for all $C \in \vec{\mathcal{C}}_i$ and $i \in [k]$. To see this, note that $|V(C)| \ge \gamma \ge \frac{\gamma}{\gamma-1}\Delta^+(D) \ge \frac{\gamma}{\gamma-1}\Delta^+(D[V'])$ for every $C \in \vec{\mathcal{C}}_i$ and $i \in [k]$. We can therefore apply Lemma~\ref{acyclichittingset} to the digraph $D[V']$ equipped with the partition $(V(C)|C \in \vec{\mathcal{C}}_i, i \in [k])$ to find a system of representatives $X$ which is acyclic in $D[V']$ and thus in $D$. Next we claim that each of the sets $X_i \setminus X, i \in [k]$ is acyclic in $D$. Indeed, the digraph $D[X_i \setminus X]=D[X_i]-(X_i \cap X)$ is obtained from a disjoint union of oriented paths and cycles by removing at least one vertex from each directed cycle, and is therefore acyclic. Hence, $X_1\setminus X,X_2 \setminus X,\ldots,X_k \setminus X,X$ is a partition of $V(D)$ into acyclic sets which certifies that $\vec{\chi}(D) \le k+1=\left\lfloor \frac{\Delta}{3}\right\rfloor +2$.
\end{proof}
We can now complete the proof of Theorem~\ref{main} by applying Lemma~\ref{maxdegreduction} a second time.
\begin{proof}[Proof of Theorem~\ref{main}]
Let $\ell:=\left\lfloor\frac{\Delta}{3g}\right\rfloor+1$. By Lemma~\ref{maxdegreduction} there exists a partition $Y_1,\ldots,Y_\ell$ of $V(D)$ such that $\Delta(D[Y_i]) \le \frac{\Delta}{\ell}<3g$ for every $i \in [\ell]$. 
We claim that for every $i \in [\ell]$, we have $\vec{\chi}(D[Y_i]) \le g+1$. Suppose by way of a contradiction that $\vec{\chi}(D[Y_i]) \ge g+2$ for some $i \in [\ell]$. Consider a subgraph $D_i$ of $D[Y_i]$ with $\vec{\chi}(D_i) \ge g+2$ minimizing $|V(D_i)|+|A(D_i)|$. Clearly, $D_i$ is $(g+2)$-critical, and thus $\delta^-(D_i) \ge g+1$ by Lemma~\ref{critical}. Hence we have $$\Delta^+(D_i) \le \Delta(D_i)-\delta^-(D_i) \le \Delta(D[Y_i]) -\delta^-(D_i) \le (3g-1)-(g+1)=2g-2<\vec{g}(D) \le \vec{g}(D_i).$$ We can therefore apply Lemma~\ref{highgirth} to obtain $\vec{\chi}(D_i) \le \left\lfloor\frac{3g-1}{3}\right\rfloor+2=g+1$, which is the desired contradiction. This shows that indeed we have $\vec{\chi}(D[Y_i]) \le g+1$ for all $i \in [\ell]$. The claim now follows from $$\vec{\chi}(D) \le \sum_{i=1}^{\ell}{\vec{\chi}(D[Y_i])} \le (g+1)\left(\left\lfloor\frac{\Delta}{3g}\right\rfloor+1\right) \le \left(\frac{1}{3}+\frac{1}{3g}\right)\Delta+(g+1).$$
\end{proof}
\bibliographystyle{abbrv}
\bibliography{bibliography}
\end{document}